\documentclass[10pt]{amsart}


\usepackage{amsmath,amssymb,fullpage,enumitem}
\usepackage{color}
\usepackage{ifthen}
\usepackage{theoremref}


\newboolean{usecolor}
\setboolean{usecolor}{true}

\newcommand\startcolor{\setboolean{usecolor}{true}}

\ifx\color@rgb\@undefined\else
        \definecolor{violet}{rgb}{0.25,0,0.75}
        \definecolor{green}{rgb}{0,0.7,0}
\fi

\newcommand\blue[1]{{\ifusecolor\color{black}\fi{#1}}}

\newcommand\magenta[1]{{\ifusecolor\color{black}\fi{#1}}}
\newcommand\violet[1]{{\ifusecolor\color{black}\fi{#1}}}

\newcommand\defi[1]{\blue{\sl #1}}

\newcommand\needref[1]{\magenta{???}}
\newcommand\needcite[1]{\magenta{[???]}}


\newcommand\C{\violet{C}}
\newcommand\U{\violet{U}}
\newcommand\V{\violet{V}}
\newcommand\X{\violet{X}}
\newcommand\Y{\violet{Y}}

\newcommand\Up{\blue{U'}}
\newcommand\Pone{\violet{\bbP^1}}
\newcommand\Aone{\violet{\bbA^1}}

\newcommand\Z{\violet{Z}}

\newcommand\ubar{\violet{\bar{u}}}

\newcommand\zbar{\violet{\bar{z}}}
\newcommand\etabar{\violet{\bar\eta}}


\newcommand\bbA{{\mathbb{A}}}
\newcommand\bbC{{\mathbb{C}}}
\newcommand\bbF{{\mathbb{F}}}
\newcommand\bbG{{\mathbb{G}}}

\newcommand\bbP{{\mathbb{P}}}
\newcommand\bbQ{{\mathbb{Q}}}
\newcommand\bbR{{\mathbb{R}}}
\newcommand\bbZ{{\mathbb{Z}}}

\renewcommand{\L}{{\Lambda}}

\newcommand\m{\violet{\mu}}

\renewcommand{\b}{\violet{\beta}}
\renewcommand{\d}{\violet{\partial}}

\renewcommand{\l}{\lambda}
\renewcommand{\t}{\violet{\tau}}

\newcommand\FF{\violet{\mathcal{F}}}
\newcommand\GG{\violet{\mathcal{G}}}
\newcommand\HH{\violet{\mathcal{H}}}
\newcommand\II{\violet{\mathcal{I}}}

\newcommand\LL{\violet{\mathcal{L}}}
\newcommand\TT{\violet{\mathcal{T}}}



\newcommand\act{\violet{\mid}}

\newcommand\Gm{\violet{\bbG_m}}


\newcommand\Fp{\violet{\bbF_p}}
\newcommand\Fq{\violet{\bbF_q}}
\newcommand\Fqn{\violet{\bbF_{q^n}}}
\newcommand\Fqbar{\violet{\bar\bbF_q}}

\newcommand\E{\violet{E}}
\newcommand\Ebar{\violet{\bar{E}}}
\newcommand\Et{\violet{E^\times}}
\newcommand\Ep{\violet{E'}}

\newcommand\Znn{\violet{\bbZ_{\geq0}}}
\newcommand\Zpos{\violet{\bbZ_{>0}}}

\newcommand\piOne[1]{\violet{\pi_1(#1)}}

\newcommand\piOneU{\violet{\piOne{\U}}}
\newcommand\piOneV{\violet{\piOne{\V}}}



\newcommand\End{\violet{\operatorname{End}}}
\newcommand\Fr{\violet{\operatorname{Fr}}}

\newcommand\Hom{\violet{\operatorname{Hom}}}
\newcommand\Ind{\violet{\operatorname{Ind}}}

\newcommand\ps\s
\newcommand\rank{\violet{\operatorname{rank}}}
\newcommand\s{\violet{\operatorname{Swan}}}
\newcommand\swp{\violet{\operatorname{SwanPolygon}}}
\newcommand\swv{\violet{\operatorname{SwanVertices}}}

\newcommand\tr{\violet{\operatorname{tr}}}
\newcommand\ub{\violet{\operatorname{unipotentBlocks}}}
\newcommand\um{\violet{\operatorname{unipotentMultiplicity}}}
\newcommand\unip{\violet{\operatorname{Unip}}}
\newcommand\w{\violet{\operatorname{weights}}}
\newcommand\wm{\violet{\operatorname{weightMultiplicity}}}

\renewcommand{\c}{\violet{\operatorname{cond}}}
\renewcommand{\d}{\violet{\operatorname{drop}}}

\renewcommand{\r}{\violet{\operatorname{rank}}}




\newcommand\FFL{\violet{\FF_\L}}

\newcommand\FFl{\violet{\FF_\l}}
\newcommand\FFlG{\violet{\FF^G_\l}}
\newcommand\FFlu{\violet{\FF_{\l,\etabar}}}

\newcommand\FFlz{\violet{\FF_{\l,\zbar}}}

\newcommand\GGl{\violet{\GG_\l}}
\newcommand\GGlu{\violet{\GG_{\l,\etabar}}}
\newcommand\GGL{\violet{\GG_\L}}

\newcommand\Vl{\violet{V_\l}}
\newcommand\VlIz{\violet{V_\l^{I(z)}}}
\newcommand\VlIc{\violet{V_\l^{I(c)}}}
\newcommand\VlIx{\violet{V_\l^{I(x)}}}

\newcommand\VlG{\violet{V_\l^G}}

\newcommand\DM{\violet{M^\vee}}
\newcommand\Ml{\violet{M_\l}}
\newcommand\DMl{\violet{M^\vee_\l}}

\newcommand\EGL{\violet{(E[G],\L)}}
\newcommand\El{\violet{E_\l}}
\newcommand\Elbar{\violet{\bar{E}_\l}}
\newcommand\EL{\violet{(E,\L)}}
\newcommand\EG{\violet{E[G]}}
\newcommand\ElG{\violet{\El[G]}}

\newcommand\ElIz{\violet{\El[I(z)]}}



\newcommand\smallsum{\blue{\mbox{$\Sigma$}}}

\newcommand\smallfrac[2]{\blue{\mbox{$\frac{#1}{#2}$}}}

\newcommand\ssm\smallsetminus
\newcommand\sub\subset
\newcommand\seq\subseteq
\newcommand\sneq\subsetneq

\newcommand\etale{\violet{\'etale}}
\newcommand\indep{\violet{independent of $\l$}}


\newcommand\np{\newpage}


\newtheorem{thm}{Theorem}
\newtheorem{lem}[thm]{Lemma}
\newtheorem{lemma}[thm]{Lemma}
\newtheorem{cor}[thm]{Corollary}
\newtheorem{prop}[thm]{Proposition}


\newcommand\myitem{\rm\item\it}
\newenvironment{enum}%
	{%
	 \begin{enumerate}\setlength{\itemsep}{0.075in}}%
	{\end{enumerate}%
	}


\begin{document}


\!\!\!\!
\begin{abstract}
We show that certain ramification invariants associated to a compatible system of $\ell$-adic sheaves on a curve are independent of $\ell$.
\end{abstract}


\startcolor

\numberwithin{thm}{section}

\title{Ramification of compatible systems on curves and independence of $\ell$}
\author{Chris Hall}
\email{chall69@uwo.ca}
\date\today
\thanks{The research for this paper was carried out while the author was a von Neumann fellow at the IAS}
\maketitle



\section{Introduction}

Let $p$ be a prime, $\Fq/\Fp$ be a finite extension, and $\C/\Fq$ be a proper smooth geometrically connected curve.  Let $\Z\subset\C$ be a finite subset, $\U=\C\ssm\Z$ be the open complement, and $j\colon\U\to\C$ be the natural inclusion.  Let $\etabar$ be a geometric generic point of $U$ and $\piOneU=\pi_1(\U,\etabar)$ be the etale fundamental group.  For each closed point $c\in\C$, let $I(c)\seq D(c)\seq\piOneU$ be an inertia and decomposition subgroups, $P(c)\seq I(c)$ be the $p$-Sylow subgroup, and $\Fr_c\in D(c)$ be an element mapping to the geometric Frobenius element in $D(c)/I(c)$.

Let $\E/\bbQ$ be a number field, $\bbZ_\E$ be its ring of integers, and $\L$ be a set of non-zero primes $\l\sub\Z_\E$ not dividing $p$.  For each $\l\in\L$, let $\FFl$ be a lisse sheaf on $\U$ of $\El$-modules, let $\Vl$ be the geometric generic fiber $\FFlu$, and let $\FFL=\{\FFl\}_{\l\in\L}$ be the corresponding family of sheaves.  For each closed point $c\in\C$, let
$$
	L(T,\FF_{\l,c})
	=
	\det(1 - T\,\Fr_u\act\VlIc).
$$
We say that $\FFL$ is \defi{$\EL$-compatible} iff for every closed point $u\in\U$, the coefficients of $L(T,\FF_{\l,c})$ all lie in $\E$ and are \indep.

For each $z\in\Z$, let $\swp_z(\FFl)$, $\swv_z(\FFl)$, and $\s_z(\FFl)$ be the Swan polygon, its set of vertices, and the Swan conductor respectively of $\Vl$ as an $\ElIz$-module.  We call these the \defi{Swan invariants} of $\FFl$ about $z$ and recall their definitions in \S\ref{sec:swan}.  Our first theorem is the following:

\begin{thm}\thlabel{thm:swan:semel}
Let $\Fq$ be a finite field, $\C/\Fq$ be a proper smooth geometrically connected curve, and $\U\seq\C$ be a dense Zariski open subset over $\Fq$.  Let $\E/\bbQ$ be a number field,  $\L$ be a set of non-zero primes $\l\sub\bbZ_\E$ not dividing $q$, and $\FFL=\{\FFl\}_{\l\in\L}$ be an $\EL$-compatible system of lisse sheaves on $\U$.  Then the following hold, for each $z\in\C\ssm\U$:

\smallskip
\begin{enum}
\myitem\label{it:thm:swan:cond} $\s_z(\FFl)$ is \indep;
\myitem\label{it:thm:swan:poly} $\swv_z(\FFl)$ is \indep, and thus so is $\swp_z(\FFl)$.
\end{enum}
\end{thm}

\noindent
Of course, independence of $\l$ is an assertion about each pair of primes $\l,\l'\in\L$, so there is no loss of generality in supposing $\L$ is finite.  Moreover, given $\swv_z(\FFl)$, one can easily determine $\s_z(\FFl)$, hence \thref{thm:swan:semel} is equivalent to the following theorem when $\C=\Pone$:

\begin{thm}\thlabel{thm:swan:bis}
Suppose that the hypotheses of \thref{thm:swan:semel} hold, that $\L$ is finite, and that $\C=\Pone$.  Then $\swv_z(\FFl)$ is \indep, for each $z\in\C\ssm\U$.
\end{thm}

\noindent
An initial step in our proof of \thref{thm:swan:semel} is to show that one can reduce to the case $\C=\Pone$, hence these two theorems are equivalent.  While knowing $\s_z(\FFl)$ is usually not enough to determine $\swv_z(\FFl)$, we show that nonetheless the following theorem implies both of the previous theorems:

\begin{thm}\thlabel{thm:swan:ter}
Suppose that the hypotheses of \thref{thm:swan:bis} hold.  Then $\s_z(\FFl)$ is \indep, for each $z\in\Pone\ssm\U$.
\end{thm}

\noindent
For the proofs of these theorems, see \S\ref{sec:proof-of-swan-thms}.

Given an integer $w$, we say that $\FFl$ is \defi{pointwise pure of weight $w$} iff for every closed point $u\in\U$, each zero $\alpha\in\Elbar$ of $L(T^{\deg(u)},\FF_{\l,u})$ lies in $\Ebar\sub\Elbar$ and satisfies
$
	|\iota(\alpha)|^2 = (1/q)^w
$
for every field embedding $\iota\colon\Ebar\to\bbC$.  We say that an $\EL$-compatible system $\FFL$ is \defi{pointwise pure of weight $w$} iff some (hence every) $\FFl$ is pointwise pure of weight $w$.

Let $z\in Z$ be a point and $\zbar\to z$ be a geometric point.  Let $\FFlz$ be the $\El$-module $(j_*\FFl)_\zbar$ and $\r_z(\FFl)$ be its $\El$-dimension.  For each positive integer $e$, let $\um_{e,z}(\Vl)$ be the largest non-negative integer $m$ such that some $\ElIz$-submodule of $\Vl$ is isomorphic to $\unip(e)^{\oplus m}$, and let
$$
	\ub_z(\FFl)
	=
	\{\,
		(e,m)
		:
		m=\um_{e,z}(\Vl)
		\mbox{ and }
		m>0
	\,\}
$$
be the set describing the structure of the maximal $\ElIz$-submodule of $\Vl$ where $I(z)$ acts unipotently.  Finally, let
\[
	\d_z(\FFl) := \r_\El(\FFl) - \r_z(\FFl),
	\quad
	\c_z(\FFl) := \d_z(\FFl)+\s_z(\FFl),
\]
and observe that if $\s_z(\FFl)$ and $\r_z(\FFl)$ are \indep, then so are $\d_z(\FFl)$ and $\c_z(\FFl)$.  We call these (and the Swan invariants) the \defi{ramification invariants} of $\FFl$ about $z$.

\begin{thm}\thlabel{thm:pure}
Suppose that the hypotheses of \thref{thm:swan:semel} hold and that $\FFL$ is pointwise pure of weight $w$.  Then, for each $z\in\C\ssm\U$, the following hold:

\smallskip
\begin{enum}
\setcounter{enumi}{1}
\myitem $\r_z(\FFl)$ is \indep{}, and thus so are $\d_z(\FFl)$ and $\c_z(\FFl)$;
\myitem $\ub_z(\FFl)$ is \indep{}.
\end{enum}
\end{thm}

\noindent
See \S\ref{sec:proof-of-purity-theorem} for a proof.

As a corollary of \thref{thm:pure} we obtain the following result which is also an immediate corollary of \cite[th.~9.8]{Deligne:Constantes} and \cite[Appendix]{Katz:WR}:

\begin{cor}
Under the hypotheses of \thref{thm:pure}, the truth of each of the following assertions is \indep:

\smallskip
\begin{enum}
\rm\item\label{li:tame}\it $\FFl$ has local tame monodromy about $z$;
\rm\item\label{li:unip}\it $\FFl$ has local unipotent monodromy about $z$.
\rm\item\label{li:triv}\it $\FFl$ has local trivial monodromy about $z$.
\end{enum}
\end{cor}

\noindent
Indeed, \eqref{li:tame} (resp.~\eqref{li:triv}) holds if and only if $\s_z(\FFl)=0$ (resp.~$\d_z(\FFl)=0$).  Moreover, \eqref{li:unip} holds if and only if
$
	\sum_{(e,m)\in\ub_z(\FFl)} em = \dim(V_\l).
$

Suppose now that each $\FFl$ is a lisse sheaf of $\El[G]$-modules on $\U$ for some common finite group $G$.  We define the notion of an $\EGL$-compatible system $\FFL$ and prove the following theorem in \S\ref{sec:EGL-compatible}:

\begin{thm}
Let $\Fq$ be a finite field, $\C/\Fq$ be a proper smooth geometrically connected curve, and $j\colon\U\to\C$ be the inclusion of a dense Zariski open subset over $\Fq$.  Let $\E/\bbQ$ be a number field, $G$ be a finite group, $\L$ be a set of non-zero primes $\l\sub\bbZ_\E$ not dividing $q$, and $\FFL=\{\FFl\}_{\l\in\L}$ be a system of lisse sheaves on $\U$.  If $\FFL$ is $\EGL$-compatible and pure of weight $w$, then $j_*\FFL$ is $\EGL$-compatible.
\end{thm}

\noindent
One can regard this as an equivariant version of Theorem~\ref{thm:pure}, and we prove it by reducing to the latter.


\subsection{Acknowledgements}

We gratefully acknowledge Nick Katz for explaining how to show (\`a la Deligne) that $\s_z(\FFl)$ and $\c_z(\FFl)$ are \indep{}, for suggesting we investigate whether or not Swan polygons are \indep, and for comments on early drafts.


\np
\section{Swan conductors and polygons}\label{sec:swan}

Suppose the hypotheses of Theorem~\ref{thm:swan:semel} hold, and fix $z$ in $\Z=\C\ssm\U$.  Recall $I(z)\seq\piOne{\U}$ is an inertia group and is defined up to conjugation.


\subsection{Single modules}

Let $I(z)^{(r)}$ be the subgroup indexed by the $r\geq 0$ in the upper-numbering filtration on $I(z)$ (cf.~\cite[1.0]{Katz:GKM}) and $P(z)\seq I(z)$ be the $p$-Sylow group.  Then $I(z)=I(z)^{(0)}$ and $I(z)\supset P(z)\supset I(z)^{(r)}\supset I(z)^{(s)}$ for $0<r<s$.

Let $M_\l$ be a non-zero finite-dimensional $\ElIz$-module.  As shown in \cite[1.1]{Katz:GKM}, there is a unique decomposition $M_\l=\oplus_{x\geq 0}M_\l(x)$ where the submodules $M_\l(x)\seq M_\l$ satisfy
\begin{equation}\label{eqn:break-submodules}
	M_\l(0)=M_\l^{P(z)},\quad
	(M_\l(x))^{I(z)^{(r)}}
	=
	\begin{cases}
		0    & x\geq r \\
		M_\l(x) & r>x.
	\end{cases}
\end{equation}
This decomposition is the \defi{break decomposition} of $M_\l$, and the \defi{breaks} $0\leq\b_1<\cdots<\b_m$ of $M_\l$ (also called \defi{slopes}) are defined to be the $x\geq 0$ satisfying $M_\l(x)\neq 0$ (cf.~\cite[1.2]{Katz:GKM}).

\begin{lem}\thlabel{lem:tensor-break}
Let $N_\l$ be a one-dimensional $\ElIz$-module with break $y\neq x$.  If $M_\l=M_\l(x)$, then $\max\{x,y\}$ is the unique break of $M_\l\otimes_{\El} N_\l$.
\end{lem}

\begin{proof}
If $r>x$, then \eqref{eqn:break-submodules} implies $I(z)^{(r)}$ acts trivially on $M_\l=M_\l(x)$, so
$$
	(M_\l\otimes_\El N_\l)^{I(z)^{(r)}}
	=
	M_\l\otimes_\El N_\l^{I(z)^{(r)}}
	=
	\begin{cases}
    	M_\l\otimes_\El 0 & x<r\leq y \\
    	M_\l\otimes_\El N_\l & r>x\mbox{ and }r>y.
	\end{cases}
$$
Similarly, if $r>y$, then $I(z)^{(r)}$ acts trivially on and $N_\l=N_\l(y)$
$$
	(M_\l\otimes_\El N_\l)^{I(z)^{(r)}}
	=
	M_\l^{I(z)^{(r)}}\otimes_\El N_\l
	=
	\begin{cases}
		0\otimes_\El N_\l & y<r\leq x \\
		M_\l\otimes_\El N_\l & r>x\mbox{ and }r>y.
	\end{cases}
$$
Therefore,
$$
	(M_\l\otimes_\El N_\l)^{I(z)^{(r)}}
	=
	\begin{cases}
		0 & r\leq x\mbox{ or }r\leq y \\
		M_\l\otimes_\El N_\l & r>x\mbox{ and }r>y,
	\end{cases}
$$
and in particular, $r=\max\{x,y\}$ is the unique break of $M_\l\otimes_{\El}N_\l$ as claimed.
\end{proof}

The \defi{multiplicities} $\m_{1,\l},\ldots,\m_{m,\l}\geq 1$ of $M_\l$ are the positive integers $\m_{i,\l}=\dim_{\El}(M_\l(\b_i))$.

\begin{lemma}\thlabel{lem:non-negative-b_i-for-single-M_l}
If $d=\dim(M_\l)$, then $\b_1,\ldots,\b_m$ are non-negative and lie in $\frac{1}{d!}\bbZ$.
\end{lemma}

\begin{proof}
The product $\b_i\m_{i,\l}$ is a non-negative integer for each $i$ (cf.~\cite[1.9]{Katz:GKM}), and $1\leq\b_i\leq\dim_\El(M_\l)$, so $d!\b_i\in\bbZ$.
\end{proof}

For each $r\geq 0$, we define the \defi{partial Swan conductor} of $M_\l$ to be the finite sum
$$
	\s_{z,\geq r}(M_\l)=\sum_{\b_i\geq r}\,\b_i\m_{i,\l}.
$$
It is the usual Swan conductor $\s_z(M_\l)$ when $r=0$.

\begin{lem}\thlabel{lem:isolating-breaks}
Let $N_\l$ be a one-dimensional $\ElIz$-module with break $y$.  If $1\leq i<m$ and if $\b_i<y<\b_{i+1}$, then
$$
	\s_z(M_\l\otimes N_\l)
	=
	y(\m_{1,\l}+\cdots+\m_{i,\l})+\s_{z,\geq y}(M_\l).
$$
\end{lem}

\begin{proof}
Since $y$ is not a break of $M_\l$, \thref{lem:tensor-break} implies that $\max\{y,\b_j\}$ is the unique break of $M_\l(\b_j)\otimes_\El N_\l$ for $j=1,\ldots,m$.  Therefore the breaks and respective multiplicities of $M_\l\otimes_\El N_\l$ are $y,\b_{i+1},\ldots,\b_m$ and $\m_{1,\l}+\cdots+\m_i,\m_{i+1,\l},\ldots,\m_{m,\l}$, and hence
$$
	\s_z(M_\l\otimes_\El N_\l)
	=
	y(\m_{1,\l}+\cdots+\m_{i,\l})
	+ \b_{i+1}\m_{i+1,\l} + \cdots + \b_m\m_{m,\l}
$$
as claimed.
\end{proof}

\begin{cor}\thlabel{cor:partial-swan-difference}
Let $N_{\l,1},N_{\l,2}$ be one-dimensional $\ElIz$-modules with respective breaks $y_1,y_2$.  If $1\leq i<m$ and if $\b_i<y_1\leq y_2<\b_{i+1}$, then
$$
	\s_z(M_\l\otimes N_{\l,2}) - \s_z(M_\l\otimes N_{\l,1})
	=
	(y_2-y_1)(\m_{1,\l}+\cdots+\m_{i,\l}).
$$
\end{cor}

\begin{proof}
The hypotheses on $y_1,y_2$ imply that
$$
	\s_{z,\geq y_1}(M_\l\otimes_\El N_{\l,1})
	=
	\s_{z,\geq y_2}(M_\l\otimes_\El N_{\l,2}),
$$
and thus the corollary follows from \thref{lem:isolating-breaks}.
\end{proof}

Let $\swp_z(M_\l)$ be the \defi{Swan polygon} of $M_\l$ (see~\cite[1.2]{Katz:GKM} or \cite[pg.~213]{Katz:WR}).  It is the finite polygon in $\bbR^2$ whose vertices are
$$
	\swv_z(\FFl)
	=
	\left\{\,
		(x_j,y_j)
		:
		0\leq j\leq m,\ 
		x_j = \sum_{i\leq j} \m_{i,\l},\ 
		y_j = \sum_{i\leq j} \b_i\m_{i,\l}
	\,\right\}
$$
and whose edges join $(x_j,y_j)$ to $(x_{j+1},y_{j+1})$ for $0\leq j<m$.


\subsection{Compatible systems}

Suppose that $\L$ is finite.  Let $\FFL$ be an $\EL$-compatible system of sheaves on $\C$ of common generic rank $r$.  Let $z\in\C$ be a closed point and $M_\l=\FFlu$ regarded as an $\ElIz$-module, and let
$$
	\s_{z,\geq x}(\FFl):=\s_{z,\geq x}(M_\l)\mbox{ for }x\geq 0,\quad
	\s_z(\FFl):=\s_z(\FFl)
$$
be the partial and usual Swan conductors.  Finally, let
$$
	\swp_z(\FFl):=\swp_z(M_\l),\quad
	\swv_z(\FFl):=\swv_z(M_\l)
$$
be the respective Swan polygon and vertices.

Let $\b_1,\b_2,\ldots,\b_m$ be the increasing sequence of breaks which occur in at least one of $M_\l$, and let $\mu_{i,\l}:=\dim_\El(M_\l(\b_i))$.

\begin{lemma}\thlabel{lem:discrete-b_i}
$\b_1,\b_2,\ldots,\b_m\in\frac{1}{r!}\bbZ_{\geq 0}$.
\end{lemma}

\begin{proof}
Follows from \thref{lem:non-negative-b_i-for-single-M_l}.
\end{proof}

Therefore one can always satisfy the hypotheses of the following lemma.

\begin{lem}\thlabel{lem:swan-independence}
If $s_1,s_2,\ldots,s_{2m}$ is a sequence in $\bbR$ satisfying $\b_i < s_{2i-1} < s_{2i} < \b_{i+1}$ for $1\leq i<m$, then the following are equivalent:

\smallskip
\begin{enum}
\myitem\label{it:si:mu_i} $\mu_{i,\l}$ is \indep{} for $1\leq i\leq m$;
\myitem\label{it:si:partial-swan} $\s_{z,\geq s_j}(\FFl)$ is \indep{} for $1\leq j\leq 2m$;
\myitem\label{it:si:swan-polygon} $\swp_z(\FFl)$ is \indep{};
\myitem\label{it:si:swan-vertices} $\swv_z(\FFl)$ is \indep{}.
\end{enum}
\end{lem}

\begin{proof}
First, \thref{cor:partial-swan-difference} implies that
$$
	\s_{z,\geq s_{2i}}(\FFl) - \s_{z,\geq s_{2i-1}}(\FFl)
	=
	(s_{2i}-s_{2i-1})(\mu_{1,\l}+\cdots+\mu_{i,\l})
	\mbox{ for }
	1\leq i\leq m
$$
so \eqref{it:si:mu_i} and \eqref{it:si:partial-swan} are equivalent.  Second, 
$$
	\swv_z(\FFl)
	=
	\left\{\,
		\left(
			\sum_{1\leq i\leq j}\mu_{i,\l},
			\sum_{1\leq i\leq j}\b_i\mu_{i,\l}
		\right)
		:
		0\leq j\leq m
	\,\right\},
$$
hence \eqref{it:si:mu_i} and \eqref{it:si:swan-polygon} are equivalent.  Finally, the Swan polygon determines and is completely determined by its vertices, hence \eqref{it:si:swan-polygon} and \eqref{it:si:swan-vertices} are equivalent.
\end{proof}


\subsection{Relating Swan polygons and conductors}

Throughout this section we suppose that $\C=\Pone$ and that $\Lambda$ is finite.  Let $n$ be a positive integer which is not divisible by the characteristic of $k$, and let $[n]\colon\Pone\to\Pone$ be the $n$th power map.

\begin{lem}\thlabel{lem:pullback-swan}
$$
	\s_{z,\geq x}([n]^*\FFl)
	=
	\begin{cases}
    	\s_{[n](z),\geq x}(\FFl) & z\neq 0,\infty \\
    	n\cdot\s_{[n](z),\geq x/n}(\FFl) & \mbox{otherwise}
	\end{cases}
$$
\end{lem}

\begin{proof}
On one hand, if $z\neq 0,\infty$, then $[n]$ is unramified over $[n](z)$.  Moreover, the corresponding breaks and multiplicities of $[n]^*\FFl$ at $z$ coincide with those of $\FFl$ at $[n](z)$, and thus
$$
	\s_{z,\geq x}([n]^*\FFl)
	=
	\s_{[n](z),\geq x}(\FFl).
$$
On the other hand, if $z=0$ (resp.~$z=\infty$), then $[n](z)=0$ (resp.~$[n](z)=\infty$) and $[n]$ is totally and tamely ramified over $[n](z)$ since $n$ is coprime to $p$.  Moreover, the breaks of $[n]^*\FFl$ at $z$ are the products of each of the breaks $\b_1,\ldots,\b_m$ of $\FFl$ at $[n](z)$ with $n$ and the corresponding multiplicities $\mu_1,\ldots,\mu_m$ are unchanged (see \cite[pg.~217]{Katz:WR}), and thus
$$
	n\cdot \s_{[n](z),\geq x/n}(\FF_\l)
	=
	n\cdot \sum_{\b_i\geq x/n} \b_i\cdot\m_{i,\l}
	=
	\sum_{n\b_i\geq x} n\b_i\cdot\m_{i,\l}
	=
	\s_{z,\geq x}([n]^*\FF_\l)
$$
as claimed.
\end{proof}

\begin{cor}\thlabel{cor:swan-pullback}
The following are equivalent for all $z\in\Z$:

\smallskip
\begin{enum}
\myitem\label{cor:it:swan-poly-downstairs} $\swp_z(\FFl)$ is \indep;
\myitem\label{cor:it:partial-swan-downstairs} $\s_{z,\geq x}(\FFl)$ is \indep{} for all $x\geq 0$;
\myitem\label{cor:it:partial-swan-upstairs} $\s_{z,\geq x}([n]^*\FFl)$ is \indep{} for all $x\geq 0$;
\myitem\label{cor:it:swan-poly-upstairs} $\swp_z([n]^*\FFl)$ is \indep.
\end{enum}
\end{cor}

\begin{proof}
One one hand, the Swan polygon is completely determined by the partial Swan conductors for all $x\geq 0$, hence \eqref{cor:it:swan-poly-downstairs} and \eqref{cor:it:partial-swan-downstairs} are equivalent and \eqref{cor:it:partial-swan-upstairs} and \eqref{cor:it:swan-poly-upstairs} are equivalent.  On the other hand, \thref{lem:pullback-swan} implies that \eqref{cor:it:swan-poly-downstairs} and \eqref{cor:it:swan-poly-upstairs} are equivalent.
\end{proof}

Recall that each $\FFl$ has generic rank $r$, that $\b_1,\b_2,\ldots,\b_m$ is the increasing sequence of breaks occurring in at least one $M_\l$, and that $\m_{i,\l}=\dim_\El(M_\l(\b_i))$.

\begin{lem}\thlabel{lem:Pone-swan}
Suppose that $z\in\Pone(\Fq)$ and that $E\supseteq\bbQ(\zeta_p)$.  Let $n$ be an integer satisfying $n>3r!$ and $s_0,s_1,s_2,\ldots$ be the sequence of integers given by
$$
	s_j
	=
	\begin{cases}
		0 & \mbox{if }j=0 \\
		\lfloor 1+n\b_i \rfloor & \mbox{if }j=2i-1<2m\mbox{ and }i\in\Zpos \\
		\lfloor 2+n\b_i \rfloor & \mbox{if }j=2i<2m+1\mbox{ and }i\in\Zpos
	\end{cases}.
$$

\smallskip\noindent
Let $z\in\Z$ and $Y=\{z\}$, and, for $0\leq j\leq 2m$, let $\TT_{s_j,\L}$ be the $\EL$-compatible system of sheaves in \thref{lem:artin-schreier-with-integer-swan}.  Then the following are equivalent:
\begin{enum}
\myitem\label{prop:Pone-swan:it:swan-poly-downstairs} $\swp_z(\FFl)$ and $\swv_z(\FFl)$ are \indep{};
\myitem\label{prop:Pone-swan:it:swan-poly-upstairs} $\swp_z([n]^*\FFl)$ and $\swv_z([n]^*\FFl)$ are \indep.
\myitem\label{prop:Pone-swan:it:mu-and-swan} $\mu_{i,\l}$ and $\s_{z,\geq s_j}([n]^*\FFl)$ are \indep{} for $0<i<m+1$ and $0<j<2m+1$;
\myitem\label{prop:Pone-swan:it:swans} $\s_z([n]^*\FFl\otimes_\El\TT_{s_j,\l})$ is \indep{} for $0\leq j<2m+1$.
\end{enum}
\end{lem}

\begin{proof}
\thref{cor:swan-pullback} implies that \eqref{prop:Pone-swan:it:swan-poly-downstairs} and \eqref{prop:Pone-swan:it:swan-poly-upstairs} are equivalent.  The hypotheses that $n>3r!$ implies
$$
	n\b_i < s_{2i-1} < s_{2i} < n\b_{i+1}
	\mbox{ for }
	0<i<m,
$$
so \thref{lem:swan-independence} implies that \eqref{prop:Pone-swan:it:swan-poly-upstairs} and \eqref{prop:Pone-swan:it:mu-and-swan} are equivalent.  Finally, $\TT_{s_0,\l}=\TT_{0,\l}$ is the constant sheaf $\El$ for each $\l\in\L$, and thus \thref{lem:isolating-breaks} implies
$$
	\s_z([n]^*\FFl\otimes_\El\TT_{s_j,\l})
	=
	\begin{cases}
	n\b_1\mu_{1,\l}
	+
	\s_{z,\geq s_1}([n]^*\FFl)
	& \mbox{ if }j=0 \\[0.1in]
	s_j(\mu_{1,\l}+\cdots+\mu_{i,\l})
	+
	\s_{z,\geq s_{j+1}}([n]^*\FFl)
	& \mbox{ if } 0<j<2m \\[0.1in]
	s_j(\mu_{1,\l}+\cdots+\mu_{m-1,\l})
	+
	n\b_m\mu_{m,\l}
	& \mbox{ if }j=2m<\infty.
	\end{cases}
$$

\smallskip\noindent
The left side is \indep{} if and only the right side is, hence \eqref{prop:Pone-swan:it:mu-and-swan} and \eqref{prop:Pone-swan:it:swans} are equivalent.
\end{proof}


\section{Reductions and Constructions}\label{sec:reductions}

In this section we present reductions which allow us to strengthen the hypotheses of \thref{thm:swan:semel}, \thref{thm:swan:bis}, and \thref{thm:swan:ter} respectively, e.g., that $\Fq$ and $\E$ are `sufficiently large' and $\U$ is `sufficiently small.'


\subsection{Field extensions and shrinking $U$}

The following lemma allows us to reduce to the case where $\Fq$ and $\E$ are `sufficiently large' and $\U$ is a `sufficiently small' dense Zariski open subset of $\C$.

\begin{lem}\thlabel{lem:useful-reductions}
Let $\Ep/\E$ be a finite extension and $\L'$ be the primes $\l'$ of $\Ep$ lying over primes in $\L$.  Then any of \thref{thm:swan:semel}, \thref{thm:swan:bis}, and \thref{thm:swan:ter} respectively holds if and only if it holds after any of the following operations:

\smallskip
\begin{enum}
\myitem\label{it:finite-field-extension} replace $\Fq$ by a finite extension $\bbF_{q^n}$;
\myitem\label{it:finite-number-field-extension} replace $\EL$ by $(\Ep,\L')$;
\myitem\label{it:shrink-to-dense-open} replace $\U$ by a dense open subset $\Up$.
\end{enum}
\end{lem}

\begin{proof}
The ramification invariants are geometric so do not change if we replace $\Fq$ by a finite extension $\bbF_{q^n}$, so \eqref{it:finite-field-extension} holds.  Nor do they change if we replace $\El$ by a finite extension $\Ep_{\l'}$, so \eqref{it:finite-number-field-extension} hold.  If $\Up\seq\U$ is a dense Zariski open subset and if $z\in \U\ssm\Up$, then
$$
	\r_z(\FFl)=\r_\El(\FFl),\ \ 
	\d_z(\FFl)=\s_z(\FFl)=\c_z(\FFl)=0
$$
while
$$
	\swv_z(\FFl) = \{\,(0,0),(0,r)\,\},\ \ 
	\ub_z(\FFl) = \{\,(r,1)\,\}
$$
for $r=\rank_{\El}(\FFl)$.  In particular, all of these are \indep, so \eqref{it:shrink-to-dense-open} holds.  The final assertion about being able to restriction to $\L''$ is clear since independence of $\l$ is established individually for each pair $\l,\l'\in\L$.
\end{proof}


\subsection{Reducing to $\C=\Pone$}

The next two lemmas imply that, up to replacing $\Fq$ by a finite extension $\Fqn$ (e.g., so that $\Z\seq\C(\Fqn)$), we may assume $\C=\Pone$ without loss of generality:

\begin{lemma}\thlabel{lem:push-to-P^1}
Suppose that $f\colon\C\to\Pone$ is a finite morphism satisfying $f^{-1}(f(\Z))\seq\C(\Fq)$ and $|f^{-1}(f(\Z))|=\deg(f)|\Z|$, and let $\delta=(\deg(f)-1)\r(\FFl)$.  Then the following hold, for each $z\in Z$:

\smallskip
\begin{enum}
\myitem\label{it:push-to-P^1:EL-compatible-on-V} $\{f_*\FFl\}_{\l\in\L}$ is an $\EL$-compatible system of lisse sheaves on a dense open subset $\V\subseteq f(\U)$;
\myitem\label{it:push-to-P^1:rank-drop} $\r_z(\FFl)=\r_{f(z)}(f_*\FFl)-\delta$ and $\d_z(\FFl)=\d_{f(z)}(f_*\FFl)$;
\myitem\label{it:push-to-P^1:swan-and-total} $\s_z(\FFl)=\s_{f(z)}(f_*\FFl)$ and $\c_z(\FFl)=\c_{f(z)}(f_*\FFl)$;
\myitem\label{it:push-to-P^1:swan-poly} the vertices $\swp_{f(z)}(f_*\FFl)$ and $\swp_z(\FFl)$ satisfy
$$
	\swv_{f(z)}(f_*\FFl)
	=
	\{\,
		(x+\delta,y) : (x,y)\in\swv_z(\FFl)
	\,\}
	\cup
	\{\,(0,0)\,\};
$$
\myitem if $m_1=\delta$ and $m_e=0$ for $e>1$, then
$$
	\ub_{f(z)}(f_*\FFl)
	=
	\left\{\,
		(e,m+m_e)
		:
		(e,m)\in\ub_z(\FFl)
	\,\right\}.
$$
\end{enum}
\end{lemma}

\begin{proof}
Let $\V\subseteq f(\U)$ be a dense Zariski open subset over which $f$ is \etale.  Let $y\in\Pone$ be a closed point and let $x$ vary over $f^{-1}(y)$.  Let $D(x)\subseteq D(y)\subset\piOneV$ be the inclusion of the decomposition groups of $x,y$.  Let $\bar{x}\to x$ be a geometric point and $\II_\ubar$ be the induced $\El[D(y)]$-module $\II_{\bar{u}}=\Ind_{D(x)}^{D(y)}(\FF_{\l,\bar{x}})$, and observe that
$$
	(f_*\FFl)_{\bar{v}}
	=
	\oplus_{f(u)=v}\,\II_\ubar^{\deg(u)/\deg(v)}
$$

\smallskip\noindent
since $f$ is finite (cf.~\cite[II.3.5]{Milne:EC}).

Suppose first that $y=v$ is in $\V$ and thus $x=u$ is in $\U$.  Then $I(u)$ and $I(v)$ are trivial in $\piOneV$ since $f$ is \etale{} over $V$.  In particular, $\tr(\Fr_v\mid\II_{\bar{u}})$ is \indep{} since $\tr(\Fr_u\mid\FF_{\l,\bar{u}})$ is \indep.  Therefore $\tr(\Fr_v\mid(f_*\FFl)_{\bar{v}})$ is \indep{} and \eqref{it:push-to-P^1:EL-compatible-on-V} holds.

Now suppose that $y$ is in $f(\Z)$ and that $x=z$.  The condition that $|f^{-1}(f(\Z))|=\deg(f)|\Z|$ implies that $f$ is also \etale{} over $\Z$ and that the restriction of $f$ to $\Z$ is injective.  In particular, since $\Z\subseteq\C(\Fq)$ by hypothesis, $\FFlz$ is a summand of $(f_*\FFl)_{\bar{v}}$.  Moreover, there are $\deg(f)-1$ other summands $\II_{\bar{u}}$ since $f^{-1}(f(\Z))\seq\C(\Fq)$, and each of them is unramified.  Therefore
$$
	\dim_\El(\II_\ubar)
	=
	\r(\FFl),\ 
	\d_x(\FFl) = \s_x(\FFl) = \c_x(\FFl) = 0
	\mbox{ for }
	x\in f^{-1}(y)\ssm\{z\}
$$
and hence \eqref{it:push-to-P^1:rank-drop} and \eqref{it:push-to-P^1:swan-and-total} hold.  Finally, these summands contribute a horizontal segment from $(0,0)$ to $(\delta,0)$ to $\swp_{f(z)}(f_*\FFl)$ and shift all the vertices of $\swp_z(\FFl)$ to the right by $\delta$, hence \eqref{it:push-to-P^1:swan-poly} holds.
\end{proof}

The next lemma yields a function $f$ which can be used in the previous lemma:

\begin{lem}\thlabel{lem:suitable-f}
Up to replacing $\Fq$ by a finite extension $\Fqn$, there exists a finite morphism $f\colon\C\to\Pone$ such that $|f^{-1}(f(\Z))|=\deg(f)\deg(\Z)$ and $f(\Z)\subseteq\Gm$.
\end{lem}

\begin{proof}
Extend $\Fq$ so that $(\C\ssm\Z)(\Fq)$ contains a point $\infty$.  Let $d\in\Zpos$ satisfy $d\geq 6g+3$ and $p\nmid d$ and let $D$ be the divisor $d\infty$.  Apply \cite[th.~2.2.4]{Katz:TLFM} if $p>2$ or \cite[th.~2.4.4]{Katz:TLFM} if $p=2$ to construct $f\colon\C\to\Pone$ over $\Fqbar$ such that $|f^{-1}(f(\Z))|=\deg(f)\deg(\Z)$.  Replace $f$ with its composition by a general automorphism of $\Pone$ over $\Fqbar$ so that $f(\Z)\subseteq\Gm$, and extend $\Fq$ so that $f$ is defined over $\Fq$.
\end{proof}


\subsection{Artin-Schreier sheaves}

Suppose that $\C=\Pone$.

\begin{lemma}\thlabel{lem:artin-schreier-with-integer-swan}
Suppose that $E\supseteq\bbQ(\zeta_p)$ and that $\Y\sneq\Pone(\Fq)$.  For each integer $s\in\Znn$ coprime to $p$, there exists an $\EL$-compatible system $\TT_{s,\L}$ of rank-one lisse sheaves on $\Pone\ssm\Y$ such that the following hold:

\smallskip
\begin{enum}
\myitem $\s_y(\TT_{s,\l}) = s$ for every $\l\in\L$ and $y\in\Y$;
\myitem if $s=0$, then $\TT_{0,\l}$ is the constant sheaf $\El$ for each $\l\in\L$.
\end{enum}
\end{lemma}

\begin{proof}
If $s=0$, then constant sheaves $\TT_{0,\l}=\El$ clearly have the desired property, so suppose $s$ is positive.

There exists a function $f\colon\Pone\to\Pone$ which has polar divisor $s\Y$.  For example, if $\Y\seq\Aone$ and if $g\in\Fq[x]$ is a square-free polynomial with zero set $\Y$, then one can take $f=g^s$.  Otherwise, if $\infty\in\Y$ and $a\in\Pone(\Fq)\ssm\Y$, then one can construct a function $\Pone\to\Pone$ with polar divisor $(\Y\cup\{a\})\ssm\{\infty\}$ and precompose with any M\"obius transformation which swaps $a$ and $\infty$.  Either way, $f$ is tamely ramified over $\infty$ since $s$ is coprime to $p$.

Let $\psi\colon\bbF_p^\times\to\Et$ be a non-trivial additive character.  For each $\l$, let $\LL_{\psi(x),\l}$ be the Artin--Schreier sheaf corresponding to $\psi$ with coefficients in $\El$.  It is lisse on $\Aone$ and satisfies $\s_\infty(\LL_{\psi(x),\l})=1$, hence the pullback $\TT_{s,\l}=f^*\LL_{\psi(x),\l}$ is lisse on $\Pone\ssm\Y$ and satisfies $\s_y(\TT_{s,\l})=s$ for every $y\in\Y$ since $f$ is tamely ramified over $\infty$.  Moreover, the system $\{\LL_{\psi(x),\l}\}_{\l\in\L}$ is $\EL$-compatible by construction, hence so is the pullback system $\TT_{s,\L}=\{\TT_{s,\l}\}_{\l\in\L}$.  Compare \cite[pg.~217]{Katz:WR}.
\end{proof}


\section{Proof of \thref{thm:swan:semel}}\label{sec:proof-of-swan-thms}

In this section we proof the implications
$$
	\mbox{\thref{thm:swan:ter}}
	\ \Rightarrow
	\mbox{\thref{thm:swan:bis}}
	\ \Rightarrow
	\mbox{\thref{thm:swan:semel}}
$$
and then we prove \thref{thm:swan:ter}.


\subsection{\thref{thm:swan:ter} implies \thref{thm:swan:bis}}\label{subsec:ter-implies-first}

Suppose that $\C=\Pone$ and that $\L$ is finite.  By \thref{lem:useful-reductions}, we may replace $\Fq$ and $\E$ by finite extensions and suppose without loss of generality that $\Z\seq\C(\Fq)$ and that $\E\supseteq\bbQ(\zeta_p)$.  Then Theorem~\ref{thm:swan:ter} implies that the equivalent conditions of \thref{lem:Pone-swan} hold, for each $z\in\Z$, and hence it implies Theorem~\ref{thm:swan:bis} as claimed.


\subsection{\thref{thm:swan:bis} implies \thref{thm:swan:semel}}\label{subsec:bis-implies-semel}

By \thref{lem:useful-reductions}, we may replace $\Fq$ by a finite extension and suppose without loss of generality that $\Z\seq\C(\Fq)$ and that there is a morphism $f\colon\C\to\Pone$ satisfying the hypotheses of \thref{lem:push-to-P^1}.  Therefore, up to replacing $\FFL$ by $f_*\FFL$ and $U$ by a dense Zariski open $V\seq f(U)$, we may suppose without loss of generality that $\C=\Pone$.  Since it suffices to prove Theorem~\ref{thm:swan:semel} for each finite subset $\L'\seq\L$,  Theorem~\ref{thm:swan:bis} implies it as claimed.


\subsection{Proof \thref{thm:swan:ter}}

Suppose that $\C=\Pone$ and that $\L$ is finite.  Once again, by \thref{lem:useful-reductions}, we may replace $\Fq$ and $\E$ by finite extensions and suppose without loss of generality that $\Z\seq\C(\Fq)$ and that $\E\supseteq\bbQ(\zeta_p)$.

Let $z\in\Z$ and $\Y=\Z\ssm\{z\}$.  The Euler-Poincare formula for the Euler characteristic of a lisse $\El$-sheaf $\GGl$ on $U$ is given by
\begin{equation}\label{eqn:euler-poincare}
	\chi(\U,\GGl)
	=
	\r_\El(\GGl)\cdot\chi(\U,\El)
	- \s_z(\GGl)
    - \smallsum_{y\in \Y}\,\s_y(\GGl)
\end{equation}
since $\Y\sub\Z\seq\C(\Fq)$ (cf.~\cite[2.3.1]{Katz:GKM}).  Moreover, the Euler characteristic $\chi(\U,\GGl)$ is \indep{} if $\GGl$ is part of a compatible system $\GGL$ since it it is the negative of the degree of
$$
	L(T,\GGl) = \prod_{u\in U}\det\left(1 - T\,\Fr_u\mid\GGlu\right)^{-1}
$$
and all terms on the right are \indep.

Let $s$ be any positive integer which is coprime to $p$ and which exceeds $\s_y(\FFl)$ for every $y\in\Y$ and $\l\in\L$, and let $\TT_{s,\L}$ be the $\EL$-compatible system of \thref{lem:artin-schreier-with-integer-swan}.  Then \thref{lem:tensor-break} implies that
$$
	\s_z(\FFl\otimes\TT_{s,\l}) = \s_z(\FFl)
	,\ \ 
	\s_y(\FFl\otimes\TT_{s,\l}) = r\cdot \s_y(\TT_{s,\l})\mbox{ for }y\in\Y
$$
where $r=\rank_\El(\FFl)$.  Applying \eqref{eqn:euler-poincare}  with $\GGl=\FFl\otimes\TT_{s,\l}$ and rearranging terms yields
$$
	\s_z(\FFl)
	=
	r\cdot\chi(U,\El)
	-
	\chi(U,\FFl\otimes\TT_{s,\l})
	-
	r\cdot s\cdot|Y|,
$$
and in particular, everything on the right is \indep, so $\s_z(\FFl)$ is also \indep{} as claimed.


\section{Proof of \thref{thm:pure}}\label{sec:proof-of-purity-theorem}

Let $\FFL$ be an $\EL$-compatible system of lisse sheaves on $U$ of common rank $r$, and suppose it is pointwise pure of weight $w$.  Let
$$
	L(T,j_*\FFl)
	=
	\prod_{c\in\C}
	L(T^{\deg(c)},\FF_{\l,c})^{-1}	
$$
where $c$ varies over the closed points of $\C$.

\begin{thm}\thlabel{thm:independence-of-missing-eulers}
Suppose the hypotheses of \thref{thm:pure} hold.  Then, for each $z\in\Z$, the Euler factor $L(T,\FF_{\l,z})$ lies in $\E[T]$ and is \indep, and thus so are $\r_z(\FFl)$, $\d_z(\FFl)$, and $\c_z(\FFl)$. 
\end{thm}

\begin{proof}
Using Deligne's theorem, Katz showed that $L(T,\FF_{\l,z})$ lies in $\E[T]$ and is \indep{} (see \cite[Appendix]{Katz:WR}).  Since
$$
	\r_z(\FFl) = \deg(L(T,\FF_{\l,z}) = r - \d_z(\FFl),
$$
it follows immediately that $\r_z(\FFl)$ and $\d_z(\FFl)$ are \indep.  Moreover,
$$
	\c_z(\FFl) = \r_z(\FFl) + \s_z(\FFl)
$$
is \indep{} by Theorem~\ref{thm:swan:semel}.
\end{proof}

It remains to show that $\ub_z(\FFl)$ is \indep.  By \thref{lem:useful-reductions}, we may suppose without loss of generality that $\Z\seq\C(\Fq)$.  Let $z\in\Z$, and consider a factorization
$$
	L(T,\FF_{\l,z})
	=
	\prod_{i=1}^r
	(T-\alpha_{z,i})
$$
over $\Ebar$.  For each field embedding $\iota\colon\Ebar\to\bbC$  and integer $s$, let
$$
	\wm_{\iota,s,z}(\FFl)
	=
	\left|\{\,
		i
		:
		|\iota(\alpha_i)|^2 = (1/q)^s
	\,\}\right|,
$$
and let
$$
	\w_{\iota,z}(\FFl)
	=
	\{\,
		(e,m)
		:
		m=\wm_{w+e-1,z}(\FFl)
		\mbox{ and }
		m>0
	\,\}.
$$
If $\d_z(\FFl)=0$, that is, if $\FFl$ is lisse over $U\cup\{z\}$, then one can show that
$$
	\wm_{\iota,s,z}(\FFl)
	=
	\begin{cases}
	r & s=w \\
	0 & s\neq w
	\end{cases}
$$
since $\deg(z)=1$.  The following proposition deals with the general case:

\begin{prop}
$\w_{\iota,z}(\FFl)=\ub_z(\FFl)$.
\end{prop}

\begin{proof}
Follows from \cite[1.16.2--3 and 1.8.4]{Deligne:WeilII} (cf.~\cite[7.0.7]{Katz:GKM}).
\end{proof}

\noindent
In particular, not only is $\w_{\iota,z}(\FFl)$ independent of $\iota$, it is \indep{} by \thref{thm:independence-of-missing-eulers}.  Therefore $\ub_z(\FFl)$ is \indep{} as claimed.


\section{$\EGL$-compatible Sytems}\label{sec:EGL-compatible}

Let $G$ be a finite group, $\E/\bbQ$ be a number field, and $\L$ be a finite set of non-zero primes $\l\sub\Z_\E$ not dividing $q$.  Let $\C/\Fq$ be a proper smooth geometrically connected curve and $\U\seq\C$ be a dense Zariski open subset.  For each $\l\in\L$, let $\FF_\l$ be a lisse sheaf on $U$ of $\El[G]$-modules and $V_\l$ be the geometric generic fiber $\FFlu$.

Given a dense Zariski open subset $\X\seq\C$ defined over $\Fq$, we say that the system $\FFL=\{\FFl\}_{\l\in\L}$ is \defi{$\EGL$-compatible on $\X$} (resp.~\defi{weakly $\EGL$-compatible}) iff for every closed point $x\in\X$, every integer $m\geq 0$ (resp.~$m=0$), and every element $g\in G$, the trace
$$
	\tr\left(g\cdot\Fr_x^m\act\VlIx\right)
$$
lies in $\E$ and is \indep{}.

We say that $\FFL$ is \defi{(pointwise) pure of weight $w$} on $U$ iff every $\FFl$ is pointwise pure of weight $w$ as a lisse $\El$-sheaf on $U$.

\begin{thm}\thlabel{thm:EGL-compatible}
Let $\Fq$ be a finite field, $\C/\Fq$ be a proper smooth geometrically connected curve, and $j\colon\U\to\C$ be the inclusion of a dense Zariski open subset over $\Fq$.  Let $\E/\bbQ$ be a number field, $G$ be a finite group, $\L$ be a set of non-zero primes $\l\sub\bbZ_\E$ not dividing $q$, and $\FFL=\{\FFl\}_{\l\in\L}$ be a system of lisse sheaves on $\U$.  Then the following hold:

\smallskip
\begin{enum}
\myitem\label{it:thm:weakly-EGL} If $\FFL$ is weakly $\EGL$-compatible on $\U$, then $j_*\FFL$ is weakly $\EGL$-compatible on $\C$.
\myitem\label{it:thm:direct-image-is-EGL} If $\FFL$ is $\EGL$-compatible and pure of weight $w$ on $\U$, then $j_*\FFL$ is $\EGL$-compatible on $\C$.
\end{enum}
\end{thm}

\noindent
If $G$ acts trivially on each $\Vl$ (e.g., if is the trivial group), then $\FFL$ is $\EGL$-compatible if and only if it is $\EL$-compatible, in which case the theorem follows from \thref{thm:independence-of-missing-eulers}.  The proof of \thref{thm:EGL-compatible}, which uses \thref{thm:pure}, will occupy the remainder of the section.

Let $\FFlG\subseteq\FFl$ be the $\El[G]$-subsheaf of $G$-invariants; it is the lisse $\El$-sheaf on $U$ whose geometric generic fiber is $\VlG$.

\begin{lem}\thlabel{lem:EGL-invariants}
If $\FF_\L$ is $\EGL$-compatible on $\U$, then so is $\{\,\FFlG\,\}_{\l\in\L}$.
\end{lem}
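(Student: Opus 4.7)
My plan is to express the $G$-invariant subsheaf via the canonical averaging idempotent in $E_\l[G]$ and reduce each trace over $\FFl^G$ to a $G$-indexed sum of traces over $\FFl$, each of which is independent of $\l$ by hypothesis.

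First I would observe that since $E_\l$ has characteristic zero, $|G|$ is invertible in $E_\l$, so
\[
e_G = \frac{1}{|G|} \smallsum_{h \in G}\, h \ \in\ E_\l[G]
\]
is a well-defined idempotent, and the endomorphism it defines on any $E_\l[G]$-module projects onto the $G$-invariants. Because $e_G$ is a morphism of sheaves of $E_\l[G]$-modules and formation of kernels and images commutes with stalks, $\FFl^G$ is a constructible $E_\l[G]$-subsheaf of $\FFl$ and $(\FFl^G)_\ubar = e_G\cdot\FFlu$ as a direct summand of $\FFlu$. The $G$-action on $\FFl$ commutes with the Galois action on stalks, so this summand is stable under $\Fr_u$ (and every $g \in G$ acts trivially on it).

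Next I would apply the standard trace identity $\tr(\phi \mid V^G) = \tr(\phi \circ e_G \mid V)$, valid for any endomorphism $\phi$ of an $E_\l[G]$-module $V$ that commutes with $G$. Taking $\phi = g\cdot\Fr_u^m$ on $V = \FFlu$ gives
\[
\tr(g\cdot\Fr_u^m \act (\FFl^G)_\ubar) = \frac{1}{|G|} \smallsum_{h \in G}\, \tr((gh)\cdot\Fr_u^m \act \FFlu).
\]
By the $\EGL$-compatibility of $\FFL$, every term on the right is independent of $\l$, and the coefficient $1/|G|$ lies in $E$ independently of $\l$. Hence the left side is independent of $\l$, which is exactly the $\EGL$-compatibility of $\{\FFl^G\}_{\l\in\L}$.

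I do not anticipate a genuine obstacle: the argument is entirely formal, and relies only on the invertibility of $|G|$ in characteristic zero together with the elementary trace-with-idempotent identity. The sole bookkeeping point worth checking is that taking $G$-invariants really commutes with passage to the stalk at $\ubar$, but this is automatic from the constructibility of $\FFl$ and the exactness of stalks on the category of constructible $E_\l$-sheaves.
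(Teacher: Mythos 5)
Your argument is correct and coincides with the paper's own proof: both use the averaging idempotent $\frac{1}{|G|}\sum_{h\in G}h$, the identity $\tr(g\cdot\Fr_u^m\act\FFlu^G)=\frac{1}{|G|}\sum_{h\in G}\tr(gh\cdot\Fr_u^m\act\FFlu)$, and the hypothesis of $\EGL$-compatibility term by term. The extra remarks on stalks commuting with $G$-invariants are harmless elaboration of the same argument.
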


\begin{proof}
Let $\pi\in\End_{\El}(\FFlu)$ be the idempotent $\frac{1}{|G|}\sum_{h\in G}h$.  It is projection onto $\VlG$ and
$$
	\tr(g\cdot\Fr_u^m\mid\VlG)
	= \tr(g\cdot\Fr_u^m\cdot\pi\mid\Vl)
	= \smallfrac{1}{|G|}\sum_{h\in G}\,\tr(gh\cdot\Fr_u^m\act\FFlu)
$$
for each integer $m\geq 0$ and element $g\in G$.  In particular, the last term of the display is \indep{} if $\FFL$ is $\EGL$-compatible on $\U$, thus so is the first.
\end{proof}

Let $M$ be a finite-dimensional $\E[G]$-module and $\Ml$ be the constant sheaf $M\otimes_\E \El$ on $\U$.

\begin{lem}\thlabel{lem:EGL-tensor-with-constant}
If $\FFL$ is $\EGL$-compatible on $\U$, then so is $\{\,\Ml\otimes_{\El}\FFl\,\}_{\l\in\L}$.
\end{lem}

\begin{proof}
The right side of the identity
$$
	\tr(g\cdot\Fr_u^m\act \Ml\otimes_{\El}\FFl)
	= \tr(g\act \Ml)\cdot\tr(g\cdot\Fr_u^m\act \FFl)
$$
is \indep{} if $\FFL$ is $\EGL$-compatible, thus so is the left.
\end{proof}

Let $\DM$ be the $\E$-dual of $M$ as $\E[G]$-module, $\DMl$ be the constant sheaf $\DM\otimes_\E\El$ on $\U$, and $\HH(\Ml,\FFl)=(\DMl\otimes_{\El}\FFl)^G$.

\begin{lem}\thlabel{lem:HMF}
Suppose $\FFL$ is $\EGL$-compatible on $\U$.  Then the following hold:

\smallskip
\begin{enum}
\myitem\label{it:HMF-is-EGL-compatible} $\{\,\HH(\Ml,\FFl)\,\}_{\l\in\L}$ is $\EGL$-compatible on $\U$;
\myitem\label{it:HMF-is-pure} if $\FFl$ is pure of weight $w$, then so is $\HH(\Ml,\FFl)$.
\end{enum}
\end{lem}

\begin{proof}
\thref{lem:EGL-invariants} and \thref{lem:EGL-tensor-with-constant} imply \eqref{it:HMF-is-EGL-compatible}.  The sheaf $\DMl$ is pure of weight 0.  Therefore the sheaf $\DMl\otimes_\El\FFl$ and the subsheaf $\HH(\Ml,\FFl)$ are pure of weight $w$, so \eqref{it:HMF-is-pure} holds.
\end{proof}

Extend $\E$ so that every simple $\EG$-module is absolutely simple.  Let $\Z=\C\ssm\U$, and for each $z\in\Z$, let $\zbar\to z$ be a geometric point.

\begin{lem}\thlabel{lem:M-multiplicity-in-HMF}
If $M$ is simple, then its multiplicity in $\VlIz$ equals $\r_z(\HH(\Ml,\FFl))$ for each $z\in\Z$.
\end{lem}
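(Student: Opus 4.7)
The plan is to unwind both sides of the claim and match them using a tensor-hom identification together with Schur's lemma; nothing more exotic should be needed.

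First I would exploit the fact that for a sheaf of $E_\l[G]$-modules the $G$-action is by global sheaf automorphisms, and so commutes with the monodromy action on any geometric stalk. In particular, taking $G$-invariants commutes with taking $I(z)$-invariants, with forming the stalk at $\zbar$, and with the direct image $j_*$. Since $\Mhat_\l$ is a constant lisse sheaf of finite rank, $j_*$ further commutes with tensoring against $\Mhat_\l$. Chaining these commutations,
\[
(j_*\HH(M_\l,\FFl))_\zbar \;=\; \big((j_*(\Mhat_\l\otimes_{E_\l}\FFl))_\zbar\big)^G \;=\; \big(\Mhat_\l\otimes_{E_\l}\FFlz\big)^G.
\]

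Next, the standard tensor-hom adjunction rewrites the right-hand side as $\Hom_{E_\l[G]}(M_\l,\FFlz)$, whose $E_\l$-dimension is by definition $\r_z(\HH(M_\l,\FFl))$.

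Finally, because $E$ has been enlarged so that every simple $E[G]$-module is absolutely simple, $\End_{E_\l[G]}(M_\l) = E_\l$. The $E_\l[G]$-module $\FFlz$ is semisimple (characteristic zero, $G$ finite), so Schur's lemma gives $\dim_{E_\l}\Hom_{E_\l[G]}(M_\l,\FFlz)$ equal to the multiplicity of $M_\l$ as a composition factor of $\FFlz$, i.e.\ the multiplicity of $M$ in $\FFlz$. The only point to verify carefully is the commutation of the $G$-action with monodromy and inertia in the first step; once that is set up the rest is a chain of formal identifications, so I do not anticipate a serious obstacle.
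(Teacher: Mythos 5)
Your proposal is correct and follows essentially the same route as the paper: both identify $(j_*\HH(M_\l,\FFl))_\zbar$ with $(\Mhat_\l\otimes_{E_\l}\FFlz)^G=\Hom_{E_\l[G]}(M_\l,\FFlz)$ by commuting the $G$-invariants past the $I(z)$-invariants (using that $I(z)$ acts trivially on the constant sheaf $\Mhat_\l$), and then invoke absolute simplicity of $M$ to read off the multiplicity. Your extra care about $j_*$ commuting with tensoring by a constant sheaf and with the $G$-projector is a harmless elaboration of the same argument.
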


\begin{proof}
We have the identities
$$
	(j_*\HH(\Ml,\FFl))_\zbar
	= ((j_*(\DMl\otimes_{\El}\FFl))^G)_{\etabar}^{I(z)}
	= ((j_*(\DMl\otimes_{\El}\FFl))_{\etabar}^{I(z)})^G
	= (\DM\otimes_\E\El\otimes_{\El}j_*(\FFl)_{\zbar})^G
$$
since the actions of $G$ and $I(z)$ commute and $I(z)$ acts trivially on $\DMl$.  The last term equals $\Hom_{\ElG}(M\otimes_\E\El,j_*(\FFl)_\zbar)$, and its $\El$-dimension is the desired multiplicity since $M$ is absolutely simple.
\end{proof}

Let $M_1,M_2,\ldots$\ be the (isomorphism classes of) simple $\EG$-modules and $\t_i\colon G\to \E$ be the character of $M_i$.

\begin{lem}\thlabel{lem:independence-of-multiplicities}\ 
The following hold, for each $z\in\Z$:

\smallskip
\begin{enum}
\myitem\label{it:lem:independence-of-multiplicities:1} if $\FFL$ is pure, then the multiplicity $m_i$ of $M_{i,\l}=M_i\otimes_\E\El$ in $j_*(\FFl)_\zbar$ is \indep;
\myitem\label{it:lem:independence-of-multiplicities:2} if $g\in G$, then $\tr(g\mid j_*(\FFl)_{\zbar}) = \sum_i m_i\cdot\t_i(g)$ and thus is \indep.
\end{enum}
\end{lem}

\begin{proof}
\thref{lem:M-multiplicity-in-HMF} and \thref{thm:pure} imply $m_i=\r_z(\HH(M_{i,\l},\FFl))$ is \indep{} if $\FFL$ is pure, so (\ref{it:lem:independence-of-multiplicities:1}) holds.  Moreover, $j_*(\FFl)_{\zbar}=\oplus_iM_{i,\l}^{\oplus m_i}$ by definition, so (\ref{it:lem:independence-of-multiplicities:2}) holds.
\end{proof}

\noindent
In particular, \thref{lem:independence-of-multiplicities}.\ref{it:lem:independence-of-multiplicities:2} implies \thref{thm:EGL-compatible}.\ref{it:thm:weakly-EGL}.

Let $K\subseteq G$ be a conjugacy class and $\delta\colon G\to\{0,1\}$ be its characteristic function.

\begin{lem}
There exist $a_1,a_2,\ldots\in \E$ satisfying $\delta=\sum_i a_i\t_i$.
\end{lem}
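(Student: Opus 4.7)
The plan is to reduce the statement to the standard completeness of the irreducible character table over a splitting field. The extension of $E$ performed just above the lemma was precisely so that every simple $E[G]$-module is absolutely simple, i.e.\ $E$ is a splitting field for $G$. Over a splitting field in characteristic zero, the basic theorem of representation theory asserts that the number of isomorphism classes of simple modules equals the number of conjugacy classes of $G$, and that the associated characters $\tau_1,\tau_2,\ldots$ form a basis for the space of $E$-valued class functions on $G$.

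Given this, the argument is essentially one line. The function $\delta$ takes the value $1$ on the conjugacy class $K$ and $0$ elsewhere; in particular $\delta$ is constant on conjugacy classes, i.e.\ it is a class function with values in $\{0,1\}\subseteq E$. Since $\tau_1,\tau_2,\ldots$ span the $E$-vector space of $E$-valued class functions, there exist scalars $a_i\in E$ with $\delta=\sum_i a_i\t_i$.

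If desired one can even write the $a_i$ explicitly via the column orthogonality relations: for any fixed $k\in K$, take $a_i=\tfrac{|K|}{|G|}\t_i(k^{-1})$, which lies in $E$ because each $\t_i$ is $E$-valued, and check $\delta=\sum_i a_i\t_i$ by evaluating both sides on an arbitrary $g\in G$ and applying the orthogonality identity $\sum_i \t_i(g)\t_i(k^{-1})=|C_G(k)|$ when $g$ is conjugate to $k$ and $0$ otherwise.

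There is no real obstacle: the only thing to check is that the classical orthogonality/completeness statements (usually stated for $\mathbb{C}[G]$) remain valid for $E[G]$, which they do since $E$ is a characteristic-zero splitting field for $G$. The work was done earlier, when we enlarged $E$ to ensure absolute simplicity of every simple $E[G]$-module.
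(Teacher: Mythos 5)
Your argument is correct and is exactly the paper's: the paper observes that the $\t_i$ form an $E$-basis of the space of $E$-valued class functions (made possible by the prior enlargement of $E$ to a splitting field) and that $\delta$ lies in that space. Your extra paragraph with the explicit coefficients $a_i=\tfrac{|K|}{|G|}\t_i(k^{-1})$ is a harmless elaboration, not a different route.
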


\begin{proof}
The $\t_i$ form an $\E$-basis of the space of characters $G\to \E$, and $\delta$ lies in that space.
\end{proof}

\noindent
Therefore, if $k\in K$ and $z\in\Z$, then
\begin{eqnarray*}
|K|\cdot \tr(k^{-1}\cdot\Fr_z^m\mid j_*(\FFl)_{\zbar})
	& = & \smallsum_g\,\delta(g^{-1})\cdot\tr(g\cdot\Fr_z^m\mid j_*(\FFl)_{\zbar}) \\
	& = & \smallsum_{i,g}\,a_i\cdot\t_i(g^{-1})\cdot\tr(g\cdot\Fr_z^m\mid j_*(\FFl)_{\zbar}) \\
	& = & |G|\cdot\smallsum_i\,a_i\cdot\tr(\Fr_z^m\mid\HH(M_{i,\l},\FFl)_\zbar)
\end{eqnarray*}
Compare \cite[pg.~171]{Katz:CC} for the last identity.  In particular, \thref{lem:HMF}.2 and \thref{thm:pure} imply the last expression is \indep, hence \thref{thm:EGL-compatible}.\ref{it:thm:direct-image-is-EGL} holds.


\bibliography{ramification-type--arxiv-v3}{}
\bibliographystyle{plain}


\end{document}